\def\R{\mathbb R}
\theoremstyle{plain}
\newtheorem{thm}{Theorem}[section]
\newtheorem{cor}[thm]{Corollary}
\newtheorem{lem}[thm]{Lemma}
\newtheorem{pro}[thm]{Proposition}
\theoremstyle{definition}
\newtheorem{defi}[thm]{Definition}
\newtheorem{exm}[thm]{Example}
\begin{document}
\nrart{1}
\setcounter{page}{15}
\keywords{Metric tree, $n$-widths, compact widths.} 
\mathclass{Primary 54E45, 47H08; Secondary 51F99, 05C05.}

\abbrevauthors{A. G. Aksoy and K. E. Kinneberg}
\abbrevtitle{Compact widths in metric trees}

\title{Compact widths in metric trees}

\author{Asuman G\"{u}ven Aksoy}
\address{Claremont McKenna College,
Department of Mathematics\\
Claremont, CA 91711, USA\\
E-mail: aaksoy@cmc.edu}

\author{Kyle Edward Kinneberg}
\address{University of California, Los Angeles,
Department of Mathematics\\
Los Angeles, CA 90095, USA\\
E-mail: kkinneberg@math.ucla.edu}

\maketitlebcp

\begin{abstract}
The definition of $n$-width of a bounded subset $A$ in a normed linear space
$X$ is  based on  the existence of $n$-dimensional subspaces. Although the
concept of  an $n$-dimensional subspace is not available for metric trees, in
this paper, using the properties of convex and compact subsets, we present a
notion of  $n$-widths for a  metric tree, called T$n$-widths. Later we
discuss properties of T$n$-widths, and show that the compact width is
attained. A relationship between the compact widths and T$n$-widths is also
obtained.
\end{abstract}

\section{Introduction}
The study of injective envelopes of metric spaces, also known as metric trees
($\mathbb{R}$-trees or T-theory) is motivated by many subdisciplines of
mathematics, biology/medicine and computer science. The relationship between
metric trees and biology and medicine stems from the construction of
phylogenetic trees \cite{ss}; and concepts of ``string matching" in computer
science are closely related with the structure of metric trees \cite{bcp}.

Unlike metric trees, in an ordinary tree  all the edges are assumed to have
the same length and therefore the metric is not often stressed. However, a
metric tree is a generalization of an ordinary tree that allows for different
edge lengths.  A metric tree is a metric space $(M,d)$ such that for every
$x,y$ in $M$ there is a unique arc between $x$ and $y$ isometric to an
interval in $\mathbb {R}$. For example, a connected graph without cycles is a
metric tree.  Metric trees also arise naturally in the study of group
isometries of hyperbolic spaces. For metric properties of trees we refer to
\cite{buneman}. Lastly, \cite{mmot} and \cite{mo} explore topological
characterization of metric trees and prove that for a separable metric space
$(M,d)$ the following are equivalent:
\begin{itemize}
\item $M$ admits an equivalent metric $\rho$ such that $(M,\rho)$ is a metric
tree.
\item  $M$ is locally arcwise connected and uniquely arcwise connected.
\end{itemize}
For an overview of geometry, topology, and group theory applications of
metric trees, consult Bestvina \cite{Best}. For a complete discussion of
these spaces and their relation to $CAT (\kappa)$ spaces we refer to
\cite{Brid}.

\begin{defi}\label{D:mseg}
    Let $x, y \in M$, where ($M$, $d$) is a metric space.  A \emph{geodesic
segment} from $x$ to $y$, is the image of an isometric embedding $\alpha :
[a,b]\rightarrow M$ such that $\alpha(a)=x$ and $\alpha(y)=b$. The geodesic
segment will be called  a \emph{metric segment} and denoted by $[x,y]$
throughout this paper.
\end{defi}

\begin{defi}\label{D:mt2}
    $(M,d)$, a metric space, is a \emph{metric tree} if and only if for all $x,y,z \in
    M$, the following holds:
    \begin{enumerate}
        \item there exists a unique metric segment from $x$ to $y$,
        and
        \item $[x,z] \cap [z,y] = \{z\} \Rightarrow [x,z] \cup [z,y] = [x,y]$.
    \end{enumerate}
\end{defi}

Note that $\mathbb{R}^n$ with the Euclidean metric satisfies the first
condition. It fails, however, to satisfy the second condition.  If the metric
$d$ is understood, we will denote $d(x,y)$ by $xy$. We also say that a point
$z$ is \textit{between} $x$ and $y$ if $xy = xz + zy$. We will often denote
this by $xzy$. It is not difficult to prove that in any metric space, the
elements of a metric segment from $x$ to $y$ are necessarily between $x$ and
$y$, and in a metric tree, the elements between $x$ and $y$ are the elements
in the unique metric segment from $x$ to $y$. Hence, if $M$ is a metric tree
and $x,y \in M$, then
$$
[x,y] = \{ z \in M : xy = xz + zy \}.
$$
The following is an example of a metric tree. For more examples see
\cite{AkTi}.

\begin{exm}[The radial metric]\label{E:radial}
    Define $d: \R^2 \times \R^2 \to \R_{\geq 0}$ by:
    \[
        d(x,y) =
            \begin{cases}
            \|x-y\| & \text{if $x = \lambda \, y$ for some $\lambda \in \R$,}\\
            \|x\| + \| y \| & \text{otherwise.}
            \end{cases}
    \]
    We can observe that the $d$ is in fact a metric and that $(\R^2,d)$ is a
metric tree.
\end{exm}

It is well known that any complete, simply connected Riemannian manifold
having non-positive curvature is a $CAT(0)$-space. Other examples include the
complex Hilbert ball with the hyperbolic metric (see \cite{Goebel}),
Euclidean buildings (see \cite{Brown}) and classical hyperbolic spaces.  If a
space is $CAT(\kappa)$ for some $\kappa < 0$ then it is automatically
$CAT(0)$-space. Although we will concentrate on metric trees, which is a
sub-class of $CAT(0)$-spaces, perhaps it is useful to mention the following:

\begin{pro}
If a metric space is $CAT(\kappa)$ space for all $\kappa$ then it is a metric
tree.
\end{pro}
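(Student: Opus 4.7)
The strategy is to verify the two conditions of Definition~\ref{D:mt2} by exploiting the freedom to take $\kappa$ arbitrarily negative. Condition (1) is immediate from the CAT($0$) specialization: every CAT($0$) space is uniquely geodesic. For condition (2), I would assume $[x,z]\cap[z,y]=\{z\}$ and reduce the problem to showing the metric identity $xy=xz+zy$; once this holds, $[x,z]\cup[z,y]$ is a rectifiable arc from $x$ to $y$ of length exactly $xy$, which by uniqueness of geodesics must coincide with $[x,y]$. After trivially disposing of the degenerate cases $x=z$ or $y=z$, I would argue by contradiction, assuming the strict triangle inequality $xy<xz+zy$.

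Now for the heart of the argument. Fix $\varepsilon>0$ with $\varepsilon<\min(xz,zy)$ (to be further specified), and let $p\in[z,x]$ and $q\in[z,y]$ be the points at distance $\varepsilon$ from $z$, so that $p\neq q$ by hypothesis. For each $\kappa=-t^{2}<0$, form the comparison triangle $\bar x\bar y\bar z$ in the model plane $M_{\kappa}^{2}$, and pick $\bar p\in[\bar z,\bar x]$ and $\bar q\in[\bar z,\bar y]$ at distance $\varepsilon$ from $\bar z$. The CAT($\kappa$) hypothesis, applied to the four points $p,q$ and their comparison images, yields the pointwise distance comparison $pq\leq\bar p\bar q$.

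The next step is two applications of the law of cosines in $M_{\kappa}^{2}$. Applied to $\bar x\bar y\bar z$, it gives the asymptotic
$$1-\cos\bar\gamma_{\kappa}\;\sim\;2\,e^{t(xy-xz-zy)}\quad\text{as }t\to\infty,$$
where $\bar\gamma_{\kappa}$ is the angle at $\bar z$. Applied to the isoceles triangle $\bar p\bar q\bar z$ with equal sides $\varepsilon$ and apex angle $\bar\gamma_{\kappa}$, it gives
$$\cosh(t\,\bar p\bar q)\;=\;1+\sinh^{2}(t\varepsilon)\bigl(1-\cos\bar\gamma_{\kappa}\bigr)\;\sim\;1+\tfrac{1}{2}\,e^{t(2\varepsilon+xy-xz-zy)}.$$
Choosing $\varepsilon<(xz+zy-xy)/2$ makes the exponent in the second display negative, so $\bar p\bar q\to 0$ as $\kappa\to-\infty$. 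The distance comparison then forces $pq=0$, i.e., $p=q$ lies in $[z,x]\cap[z,y]$ and is distinct from $z$, contradicting the intersection hypothesis.

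The main obstacle I anticipate is not conceptual but computational: one has to track two competing exponentials, $e^{2t\varepsilon}$ growing with $t$ versus $e^{t(xy-xz-zy)}$ decaying with $t$, and verify that smallness of $\varepsilon$ makes the decay dominate. This is precisely where the hypothesis ``CAT($\kappa$) for \emph{all} $\kappa$'' is used in an essential way: no finite value of negative curvature forces the triangle to collapse, but the limit $\kappa\to-\infty$ does.
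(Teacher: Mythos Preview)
The paper does not actually prove this proposition: immediately after stating it, the authors write ``For the proof of the above proposition we refer to \cite{Brid}.'' So there is nothing in the paper to compare your argument against beyond the bare citation.

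Your argument is correct and is, in outline, the standard one found in Bridson--Haefliger: uniqueness of geodesics comes for free from CAT($0$), and the tripod condition is obtained by letting $\kappa\to-\infty$ so that the comparison angle at $\bar z$ collapses, forcing nearby points on the two legs to coincide via the CAT($\kappa$) inequality. Your hyperbolic law-of-cosines asymptotics are right (one checks $1-\cos\bar\gamma_\kappa=\bigl(\cosh(t\,xy)-\cosh(t\,|xz-zy|)\bigr)/\bigl(\sinh(t\,xz)\sinh(t\,zy)\bigr)$, and the hypothesis $[x,z]\cap[z,y]=\{z\}$ together with unique geodesics rules out the degenerate case $xy=|xz-zy|$, so the leading term is indeed $2e^{t(xy-xz-zy)}$). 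The choice $\varepsilon<(xz+zy-xy)/2$ then makes $\cosh(t\,\bar p\bar q)\to 1$, hence $\bar p\bar q\to 0$, and the comparison inequality gives the contradiction $p=q$. In short: your proposal supplies exactly the proof the paper omits, by the same route the cited reference takes.
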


 For the proof of the above proposition we refer to \cite{Brid}. Note that
if a Banach space is a $CAT(\kappa)$  space for some $\kappa$ then it is
necessarily a Hilbert space and $CAT(0)$. The property that distinguishes the
metric trees from the $CAT(0)$ spaces is the fact that metric trees are
hyperconvex metric spaces. Properties of hyperconvex spaces and their
relation to metric trees can be found in \cite{AkMa}, \cite{ap},
\cite{isbell} and \cite{kirk}.  We refer to \cite{Blum} for the properties of
metric segments and to \cite{AkBo} and \cite{AkKh} for the basic properties
of complete metric trees. In the following we list some of the properties of
metric trees which will be used throughout this paper.
\begin{enumerate}
\item (Transitivity of betweenness \cite{Blum}).  Let $M$ be a metric space
and let $a,b,c,d \in M$. If $abc$ and $acd$, then $abd$ and $bcd$.
\item  (Three point property \cite{AkBo}). Let $(M,d)$ be a metric tree and
$x,y,z \in M$.  There exists $w \in M$ such that $[x,z] \cap [y,z] = [w,z]$
and $[x,y] \cap [w,z] = \{w\}$.
\item  (Uniform convexity \cite{AkBo}). A metric tree $M$ is uniformly convex.
\end{enumerate}

\section{Kolmogorov $\boldsymbol n$-widths}
The following definition due to Kolmogorov \cite{Kolm}, gives a measure for
the ``thickness" or ``massivity" of a subset $A$ in a normed linear space
$X$. Kolmogorov $n$-widths have been widely used in approximation theory (see
\cite{Pink} and references therein). Recently $n$-width has been utilized as
a measure of efficiency in the task of data compression (see \cite{Dovd},
\cite{Posn}, \cite{Dono}). Furthermore, in \cite{AkTi},  entropy quantities,
other measures of compactness and $n$-affine Kolmogorov diameter were
studied in the context of metric trees.

\begin{defi}
Let $A$ be a subset of a normed linear space $X$, and let $\mathcal{X}_n$
denote the set of $n$-dimensional subspaces of $X$. We define the
\textit{Kolmogorov $n$-width} of $A$ in $X$ to be
$$
\delta_n(A,X) =
\inf_{X_n \in \mathcal{X}_n} \,\sup_{a \in A} \,\inf_{x \in X_n} \left\| x-a
\right\|
$$
\end{defi}

The left most infimum is taken over all $n$-dimensional subspaces $X_n$ of $X$.

Clearly $\delta_n(A,X)$ gives a measure the extent to which $A$  may be
approximated by $n$-dimensional subspaces of $X$. Indeed, it is easy to see
that if $A \subset X_n$ for some $X_n \in \mathcal{X}_n$, then $\delta_n(A,X)
= 0$. A subspace $X_n$ of $X$ of dimension at most $n$ for which
$$
\delta_n(A,X) = \sup_{a \in A} \,\inf_{x \in X_n} \left\| x-a \right\|,
$$
is called an \emph{optimal subspace} for $\delta_n(A,X)$. Generally it is
very difficult to calculate $\delta_n(A,X)$ and determine optimal subspaces
$X_n $ of $\delta_n(A,X)$, although a considerable effort has been devoted to
it. In many cases one is interested in determining asymptotic behavior of
$\delta_n(A,X)$ as $n \rightarrow \infty$. Aside from defining
$\delta_n(A,X)$, Kolmogorov  also computed this quantity for particular
spaces. The following is one of his examples:

\begin{exm}
Let $\tilde {W}_{2}^{(r)}$ denote the Sobolev space of $2 \pi$-periodic,
real-valued, $(r-1)$-times differentiable functions whose $(r-1)$th
derivative is absolutely continuous and whose $r$th derivative is in $L^2=
L^2[ 0, 2\pi]$. Set
$$
\tilde{B}_{2}^{(r)}= \{f:\,\, f\in \tilde
{W}_{2}^{r},\,\,\, \| f^{(r)}\| \leq 1 \,\,\}
$$
then
$$
\delta_0(\tilde{B}_{2}^{(r)}, L^2)= \infty \,\,\,\mbox{while} \,\, \delta_{2n-1}
(\tilde{B}_{2}^{(r)}, L^2)= \delta_{2n} (\tilde{B}_{2}^{(r)}, L^2)= n^{-r},
\quad n=1,2,\dots
$$
Furthermore, the optimal subspace for $\delta_{2n}
(\tilde{B}_{2}^{(r)}, L^2)$ is the set of trigonometric polynomials of degree
less than or equal to $n-1$; namely,
$$
T_{n-1} = \mbox{span} \{1,\sin x, \cos x, \dots , \sin(n-1)x, \cos(n-1)x \}.
$$
\end{exm}

It is natural to ask whether or not we can alter the traditional definition
so that $n$-widths can be defined in metric trees. The obvious replacement
for $||x-y||$ is $d(x,y)$. The more difficult alteration, however, is
defining ``dimension'' of a set in a metric space.  In the following, we
attempt to remedy this problem for metric trees.

\subsection {$\boldsymbol n$-widths of metric trees via convexity}
We call a subset $A$ of a metric tree $(M,d)$ \textit{convex} if for any $x,y
\in A$, the metric segment $[x,y]$ is in $A$.  By definition, every metric
tree is convex. The converse is also true: it is easy to see that, if
$(M,d)$ is a metric tree and $A \subset M$ is a convex subset of $M$, then
$A$ is a metric tree. For $B \subset M$, the convex hull of $B$, denoted
by $\text{conv}(B)$, is the smallest convex set that contains $B$, where the
order is set inclusion.

\begin{defi}
Let $(M,d)$ be a metric tree, and let $A \subseteq M$. We say that $A$ is
T\textit{$n$-dimensional\/} if and only if there exist $n$ points $x_1, \dots, x_n \in
M$ such that
$$
A = \text{conv}(x_1, \dots, x_{n})
$$
and there do not exist
$i \neq j \neq k$ such that $x_i x_j x_k$. Also, we say that $A$ is
T*\textit{$n$-dimensional\/} if $A$ contains a T$n$-dimensional subset but
does not contain any T$k$-dimensional subsets for all $k > n$.
\end{defi}

Note that the restriction $i \neq j \neq k$ tells us that the points
$x_1,\dots,x_n$ are all distinct.

\begin{lem} \label{convex-hull}
If $(M,d)$ is a metric tree and $A$ is a subset of $M$, then
$$
\text{\rm conv}(A) = \{ z \in M : xzy \text{ for some } x,y \in A \}.
$$
\end{lem}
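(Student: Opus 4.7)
The plan is to set $B := \{z \in M : xzy \text{ for some } x, y \in A\}$ and establish the equality $\mathrm{conv}(A) = B$ by two inclusions. The inclusion $B \subseteq \mathrm{conv}(A)$ is immediate: if $z \in B$, pick $x, y \in A$ with $z \in [x,y]$; since $A \subseteq \mathrm{conv}(A)$ and $\mathrm{conv}(A)$ is convex, the segment $[x,y]$ lies in $\mathrm{conv}(A)$, hence so does $z$. For the reverse inclusion, I would use that $\mathrm{conv}(A)$ is the smallest convex set containing $A$, so it suffices to verify that $A \subseteq B$ (trivial, taking $x = y = a$) and that $B$ is itself convex.

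The main work lies in proving convexity of $B$. Given $u, v \in B$, I would fix $x_1, y_1, x_2, y_2 \in A$ with $u \in [x_1, y_1]$ and $v \in [x_2, y_2]$; for an arbitrary $w \in [u,v]$, the goal is to produce $x \in \{x_1, y_1\}$ and $y \in \{x_2, y_2\}$ with $w \in [x, y]$. Applying the three-point property to the triple $x_1, y_1, v$ gives a branch point $p \in [x_1, y_1]$ satisfying $[x_1, v] \cap [y_1, v] = [p, v]$. Since $u \in [x_1, y_1] = [x_1, p] \cup [p, y_1]$, two cases arise: if $u \in [x_1, p]$, then combined with the identity $[x_1, v] = [x_1, p] \cup [p, v]$ (which follows from $p \in [x_1, v]$ together with the segment-concatenation axiom) we get $u \in [x_1, v]$, and hence $[u,v] \subseteq [x_1, v]$ by a second application of the same axiom; the case $u \in [p, y_1]$ yields $[u,v] \subseteq [y_1, v]$ symmetrically. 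Thus $[u,v] \subseteq [x, v]$ for some $x \in \{x_1, y_1\}$. Running the identical argument once more on the triple $x_2, y_2, x$, this time pivoting on $v \in [x_2, y_2]$, yields $y \in \{x_2, y_2\}$ with $v \in [x, y]$, so $[u,v] \subseteq [x, v] \subseteq [x, y]$. Since $x, y \in A$, this places $w$ in $B$ and proves convexity.

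The main obstacle is the case analysis in the second paragraph: one has to combine the three-point property with the defining axiom of a metric tree to convert the geometric picture of ``$u$ lies on the $x_1$-side of the branch $p$'' into the rigorous containment $[u,v] \subseteq [x,v]$. Once this is justified, iterating the move on the second pair $\{x_2, y_2\}$ is automatic, and the rest of the argument is bookkeeping.
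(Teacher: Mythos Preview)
Your proof is correct and follows the same overall two-inclusion framework as the paper: show $A\subseteq B$, show $B$ is convex (hence $\mathrm{conv}(A)\subseteq B$), and show $B\subseteq\mathrm{conv}(A)$ directly. The difference lies in how convexity of $B$ is established. The paper, given $a\in[x,y]$ and $b\in[u,w]$ with $x,y,u,w\in A$, simply observes that $[a,y]$, $[y,u]$, and $[u,b]$ are each contained in $B$ and then invokes the tree structure to conclude $[a,b]\subseteq[a,y]\cup[y,u]\cup[u,b]\subseteq B$. Your argument instead applies the three-point property twice to prove the sharper statement that $[u,v]$ lies inside a \emph{single} segment $[x,y]$ with $x,y\in A$. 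Your route is a bit longer but more self-contained, since it derives everything from the three-point property and the segment-concatenation axiom explicitly; the paper's concatenation inclusion $[a,b]\subseteq[a,y]\cup[y,u]\cup[u,b]$ is left as an evident consequence of unique arcs in a tree. Either approach is fine here.
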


\begin{proof}
First, we observe that $C = \{ z \in M : xzy \text{ for some } x,y \in A \}$
is a convex set. Indeed, if $a,b \in C$, then $xay$ and $ubw$ for some
$x,y,u,w \in A$. By definition of $C$, the segment $[y,u]$ is in $C$, as are
the segments $[a,y]$ and $[u,b]$. Since $[a,b] \subseteq [a,y] \cup [y,u]
\cup [u,b]$, we know that $[a,b] \subseteq C$. Thus, $C$ is convex. Also, $C$
contains $A$, so by definition of convex hull, we have that $\text{conv}(A)
\subseteq C$.

Now, let $z \in C$. Then there exist some $x,y \in A$ such that $xzy$. Hence,
$z \in [x,y]$ and $[x,y] \subseteq \text{conv}(A)$, so $z \in
\text{conv}(A)$. Thus, we indeed have $C \subseteq \text{conv}(A)$ as
desired.
\end{proof}

An important concept regarding metric trees is that of ``final points''. We
have the following definition and subsequent theorem.

\begin{defi}
Let $(M,d)$ be a metric tree, and let $A \subseteq M$. We call
$$
F_A = \left\{ f \in A : f \notin (x,y) \text{ for all } x,y \in A \right\}
$$
the set of \textit{final points} of $A$. Here, $(x,y) = [x,y] \backslash \{
x,y \}$.
\end{defi}

\begin{thm}[\cite{AkBo}] \label{compact-tree}
A metric tree $(M,d)$ is compact if and only if
$$
M = \bigcup_{f \in
F_M}{[a,f]} \text{ for all } a \in M, \text{ and } \overline{F}_M \text{ is
compact.}
$$
\end{thm}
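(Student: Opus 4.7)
\medskip

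\noindent\textbf{Plan.} The statement is an equivalence; I would prove the two directions separately, and the forward direction is the substantive one.

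For the forward direction, suppose $M$ is compact. Then $\overline{F}_M$ is a closed subset of a compact space and hence compact, so only the covering assertion needs work. Fix $a \in M$ and $x \in M$; the goal is to find $f \in F_M$ with $x \in [a,f]$. The plan is to extend the segment $[a,x]$ to a maximal one by Zorn's lemma. Let $\mathcal{C}$ be the collection of segments $[a,y]$ with $axy$, ordered by inclusion, and consider a chain. Its distances $d(a,y)$ form an increasing net bounded by $\operatorname{diam}(M)$, with supremum $r$. Along a cofinal sequence $(y_n)$, the nesting of segments gives $d(y_n,y_m)=|d(a,y_n)-d(a,y_m)|$, so $(y_n)$ is Cauchy; by compactness (hence completeness) it converges to some $y\in M$ with $d(a,y)=r$. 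Using the additivity $d(a,y)=d(a,y_n)+d(y_n,y)$ one checks $a y_\alpha y$ for every $y_\alpha$ in the chain, so $[a,y]$ is an upper bound. Zorn yields a maximal segment $[a,f]$. Finally, $f$ must be a final point: if $f\in(p,q)$ for some $p,q\in M$, apply the three point property to $a,p,q$ to get a branch point $m$; since $f\in[p,q]=[p,m]\cup[m,q]$, one of $[a,p]=[a,m]\cup[m,p]$ or $[a,q]=[a,m]\cup[m,q]$ properly contains $[a,f]$, contradicting maximality.

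For the reverse direction, assume both conditions and show sequential compactness. Let $(x_n)\subseteq M$ and fix $a\in M$. By the covering hypothesis, write $x_n\in[a,f_n]$ with $f_n\in F_M\subseteq \overline{F}_M$. Compactness of $\overline{F}_M$ gives a subsequence with $f_n\to f$; since $d(a,x_n)\le d(a,f_n)$ is bounded, refine further so $d(a,x_n)\to t$. Now apply the three point property to $f,f_n,a$ to obtain branch points $w_n$ with $[a,f]\cap[a,f_n]=[a,w_n]$. The identity
\[
d(f,f_n)=\bigl(d(a,f)-d(a,w_n)\bigr)+\bigl(d(a,f_n)-d(a,w_n)\bigr)
\]
together with $d(f,f_n)\to 0$ and $d(a,f_n)\to d(a,f)$ forces $d(a,w_n)\to d(a,f)$. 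Since $t\le d(a,f)$, for large $n$ we have $d(a,x_n)\le d(a,w_n)$, so $x_n\in[a,w_n]\subseteq[a,f]$. Hence $x_n$ converges to the unique point $y\in[a,f]$ with $d(a,y)=t$.

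The main obstacle is the Zorn step in the forward direction, or more precisely the verification that each chain in $\mathcal{C}$ has an upper bound \emph{within} $\mathcal{C}$. One must extract a cofinal sequence, use the metric-tree structure to force a Cauchy property that would fail in a general metric space, invoke compactness to obtain a limit $y$, and then go back and show $[a,y_\alpha]\subseteq[a,y]$ for every element of the chain (not just the cofinal subsequence). The maximality-to-final-point deduction, by contrast, is a clean application of the three point property.
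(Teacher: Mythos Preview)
The paper does not actually prove this theorem; it is quoted from \cite{AkBo} and used as a black box in the proof of Theorem~\ref{dim-char}. So there is no in-paper argument to compare against, and I can only comment on the soundness of your plan.

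Your forward direction is correct. The Zorn argument on segments $[a,y]$ with $axy$ goes through exactly as you outline: compactness gives boundedness (so the supremum $r$ is finite) and completeness (so the Cauchy cofinal sequence converges), and passing the relation $axy_n$ to the limit yields $axy$, putting the upper bound back in $\mathcal{C}$. The maximality-to-final-point step via the three point property is clean.

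In the reverse direction your strategy is right, but one sentence is not quite justified. You assert that ``since $t\le d(a,f)$, for large $n$ we have $d(a,x_n)\le d(a,w_n)$.'' This is fine when $t<d(a,f)$, but can fail when $t=d(a,f)$: one always has $d(a,w_n)\le d(a,f)$ (because $w_n\in[a,f]$), while $d(a,x_n)\le d(a,f_n)$ may exceed $d(a,f)$ along the sequence, so $d(a,x_n)>d(a,w_n)$ is possible for infinitely many $n$. The fix is immediate and does not change your approach: since $x_n,w_n\in[a,f_n]$ one has $d(x_n,w_n)=|d(a,x_n)-d(a,w_n)|\to 0$, and since $w_n,y\in[a,f]$ one has $d(w_n,y)=|d(a,w_n)-t|\to|d(a,f)-t|$, which is $0$ in the problematic case; in the case $t<d(a,f)$ your original argument already gives $x_n\in[a,f]$ eventually. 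Either way $x_n\to y$.
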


We now characterize T$n$-dimensional subsets of a metric tree, and establish
several facts about such subsets. Here, $(M,d)$ will be a metric tree and
$\mathcal{X}_n$ will denote the set of all T$n$-dimensional subsets of $M$.

\begin{thm} \label{dim-char}
Let $A$ be a subset of $M$. Then $A$ is {\rm T}$n$-dimensional if and only if
$A$ is a compact metric tree with $F_A = \{ x_1, \dots, x_{n} \}$ for some
$x_1, \dots, x_{n} \in M$.
\end{thm}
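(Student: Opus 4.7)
My plan is to establish the two implications separately. For the forward direction, suppose $A = \text{conv}(x_1,\dots,x_n)$ with no relation $x_i x_j x_k$ among distinct generators. Applying Lemma \ref{convex-hull} to the finite set $\{x_1,\dots,x_n\}$ gives $A = \bigcup_{i,j}[x_i,x_j]$, a finite union of compact metric segments, so $A$ is compact; and $A$ is convex in $M$ by construction, hence a metric tree (as noted in the preamble). The substantive work is identifying $F_A$. The easy containment $F_A \subseteq \{x_1,\dots,x_n\}$ follows because any $f \in A$ lies on some $[x_i,x_j]$, and if $f$ is not an endpoint then $f \in (x_i,x_j)$, excluding $f$ from $F_A$.

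The reverse containment is the technical core. The key observation is that the hypothesis ``no $x_i x_j x_k$'' combined with the contrapositive of axiom (2) in Definition \ref{D:mt2} forces $[x_i,x_j] \cap [x_i,x_k] \supsetneq \{x_i\}$ for every pair $j,k \neq i$. Invoking the three-point property on each triple $(x_i,x_j,x_k)$ then shows the segments $[x_i,x_j]$ and $[x_i,x_k]$ share a common nontrivial initial subsegment at $x_i$; combined across all pairs, every $x_j$ with $j \neq i$ lies in a single ``direction'' from $x_i$. Since each $z \in A \setminus \{x_i\}$ sits on some $[x_j,x_k]$, the segment $[x_i,z]$ also starts in this common direction. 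Now, if $x_i \in (u,v)$ for some $u,v \in A$, then $u x_i v$ forces $[x_i,u] \cap [x_i,v] = \{x_i\}$, i.e., $[x_i,u]$ and $[x_i,v]$ exit $x_i$ in distinct directions --- contradicting the single-direction conclusion. Hence $x_i \in F_A$.

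For the reverse direction, suppose $A$ is a compact metric tree with $F_A = \{x_1,\dots,x_n\}$. Because metric segments in $M$ are unique, the segments of the metric subtree $A$ coincide with those of $M$, so $A$ is convex in $M$ and $\text{conv}(x_1,\dots,x_n) \subseteq A$. For the opposite inclusion, I would apply Theorem \ref{compact-tree} with $a = x_1$, obtaining $A = \bigcup_{i=1}^{n}[x_1,x_i] \subseteq \text{conv}(x_1,\dots,x_n)$. The non-collinearity condition is automatic: if some $x_i x_j x_k$ held among distinct indices, then $x_j \in (x_i,x_k) \subseteq A$ would contradict $x_j \in F_A$. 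The main obstacle throughout is the finality step of the forward direction --- converting the combinatorial hypothesis ``no three generators collinear'' into the geometric fact that $x_i$ has a unique local direction into $A$, which requires simultaneous bookkeeping of axiom (2) and the three-point property over all triples through $x_i$.
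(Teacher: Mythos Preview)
Your argument is correct. The reverse direction coincides with the paper's almost verbatim (including the appeal to Theorem~\ref{compact-tree} with $a=x_1$), and your extra remark that segments of the subtree $A$ agree with those of $M$ is a point the paper leaves implicit.

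In the forward direction you take a genuinely different route at two places. For compactness, you read off from Lemma~\ref{convex-hull} that $A=\bigcup_{i,j}[x_i,x_j]$ is a finite union of compact segments and stop there; the paper instead first proves the star decomposition $A=\bigcup_i [a,x_i]$ for every $a\in A$, then identifies $F_A$, and only afterwards invokes Theorem~\ref{compact-tree} to deduce compactness. Your way is shorter and avoids Theorem~\ref{compact-tree} in this direction altogether. For the inclusion $\{x_1,\dots,x_n\}\subseteq F_A$, you give a local ``single-direction'' argument: the contrapositive of axiom~(2) forces $[x_i,x_j]\cap[x_i,x_k]\supsetneq\{x_i\}$ for all $j,k\neq i$, and transitivity of the relation ``share a nontrivial initial subsegment at $x_i$'' then puts all of $A\setminus\{x_i\}$ in one germ of direction at $x_i$, which is incompatible with $x_i\in(u,v)$. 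The paper instead uses its star decomposition: given a hypothetical $x_j\in(y,z)$, it pushes $y$ and $z$ outward along $A=\bigcup_i[z,x_i]=\bigcup_i[y,x_i]$ to generators $x_i,x_k$ and concludes $x_j\in(x_i,x_k)$ directly. Your approach is more geometric and self-contained at $x_i$; the paper's is more combinatorial but reuses the decomposition it has already built, which is essentially the content of Theorem~\ref{compact-tree} in reverse.
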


\begin{proof}
Let $A$ be T$n$-dimensional. Then there exist $x_1, \dots, x_{n} \in M$ such
that $A = \text{conv}(x_1, \dots, x_{n})$ and there do not exist $i \neq j
\neq k$ such that $x_i x_j x_k$. By Lemma \ref{convex-hull}, $A = \{ z \in M
: x_i z x_j \text{ for some } x_i,x_j \}$. Note that $A$ is a metric tree
because it is convex. We now show that $A$ is compact.

To do this, we first show that for any $a \in A$, $ A =
\bigcup_{i=1}^{n}{[a,x_i]}$. Let $a \in A$ be fixed. If $z \in
\bigcup_{i=1}^{n}{[a,x_i]}$, then $z \in [a,x_i]$ for some $i$. Since $A$ is
convex, $[a,x_i] \subseteq A$, so $z \in A$. Now, let $z \in A$. Then there
exist $i,j$ such that $z \in [x_i,x_j]$. We know by the three point property
that there is some $w \in A$ such that $[a,x_i] \cap [a,x_j] = [a,w]$ and
$[x_i,x_j] \cap [a,w] = \{ w \}$. Note that $w \in [x_i,x_j]$, so $[x_i,w]
\cup [x_j,w] = [x_i,x_j]$. Since $w \in [a,x_i]$ and $w \in [a,x_j]$, we have
$[x_i,w] \subseteq [a,x_i]$ and $[x_j,w] \subseteq [a,x_j]$. Thus,
$$
z \in [x_i,x_j] = [x_i,w] \cup [x_j,w] \subseteq [a,x_i] \cup [a,x_j]
\subseteq \bigcup_{i=1}^{n}{[a,x_i]}.
$$
Hence, $A = \bigcup_{i=1}^{n}{[a,x_i]}$, as desired.

We now show that $F_A = \{ x_1, \dots, x_{n} \}$. Since $A = \{ z \in M : x_i
z x_j \text{ for some } x_i,x_j \}$, we see that $A = \{ z \in M : z \in
(x_i,x_j) \text{ for some } x_i,x_j \} \cup \{ x_1, \dots, x_{n} \}$. Thus,
the only possible final points of $A$ are $x_1, \dots, x_{n}$. If, for some
$j$, $x_j$ is not a final point, then there must exist some $y,z \in A$ such
that $x_j \in (y,z)$. Since $ A = \bigcup_{i=1}^{n}{[a,x_i]} $ for any $a \in
A$, we know that there exist some $x_i$ and $x_k$ such that $y \in [z,x_i]$
and $z \in [y,x_k]$. Therefore, we have $y,z \in [x_i,x_k]$, so $[y,z]
\subseteq [x_i,x_k]$. But this implies that $x_j \in (x_i,x_k)$, contrary to
our assumption about the set $\{ x_1, \dots, x_{n} \}$. Hence, $x_j \in F_A$
for all $1 \leq j \leq n$, so $F_A = \{ x_1, \dots, x_{n} \}$, as claimed. In
particular, this implies that $\overline{F}_A = F_A$ is compact. By Theorem
\ref{compact-tree} then, $A$ is compact, so $A$ is a compact tree with $F_A =
\{ x_1, \dots, x_{n} \}$.

Now let $A$ be a compact metric tree with $F_A = \{ x_1, \dots, x_{n} \}$ for
some $x_1, \dots, x_{n} \in M$. Note first that there do not exist $i \neq j
\neq k$ such that $x_i x_j x_k$. We want to show that $A = \text{conv}(x_1,
\dots, x_{n})$. By definition of final points, we actually have $x_1, \dots,
x_{n} \in A$, and since $A$ is a metric tree, it is convex. Thus,
$\text{conv}(x_1, \dots, x_{n}) \subseteq A$. Now, let $z \in A$. Since $A$
is compact, Theorem \ref{compact-tree} tells us that for all $a \in A$, $ A =
\bigcup_{i=1}^{n}{[a,x_i]}$. Therefore, we have $ z \in A =
\bigcup_{i=1}^{n}{[x_1,x_i]}$, so there is some $i$ for which $z \in
[x_1,x_i]$. Thus, $z \in \text{conv}(x_1, \dots, x_{n})$, which implies that
$A = \text{conv}(x_1, \dots, x_{n})$. Hence, $A$ is T$n$-dimensional, as
desired.
\end{proof}

\begin{lem} \label{lower-dim}
Every {\rm T}$n$-dimensional subset $X_n$ of $M$ contains a {\rm
T}$m$-dimensional subset for each $1 \leq m \leq n$.
\end{lem}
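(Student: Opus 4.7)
My plan is to exhibit, for each $1 \leq m \leq n$, an explicit T$m$-dimensional subset of $X_n$ obtained by restricting to any $m$ of the $n$ generating points. Write $X_n = \text{conv}(x_1, \ldots, x_n)$, where by the T$n$-dimensional hypothesis the $x_i$ are distinct and no triple of indices $i \neq j \neq k$ satisfies $x_i x_j x_k$. I would then set $X_m := \text{conv}(x_1, \ldots, x_m)$; any $m$-element sub-family would do, and the relabeling is just for notational convenience.

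The inclusion $X_m \subseteq X_n$ is immediate from Theorem \ref{dim-char}: the set $X_n$ is a metric tree and is therefore convex, and it contains each of the points $x_1, \ldots, x_m$, so it must contain their convex hull. To see that $X_m$ itself satisfies the T$m$-dimensional definition, I would check the two clauses directly. The $m$ chosen points are distinct, inherited from the distinctness of $x_1, \ldots, x_n$; and the forbidden-betweenness condition is hereditary, since any offending triple $x_i x_j x_k$ among the first $m$ points would already be an offending triple for the original $n$, contradicting T$n$-dimensionality of $X_n$.

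There is no substantive obstacle here: the claim is a one-step consequence of the definition together with monotonicity of the convex hull, and both ``distinctness'' and ``no three collinear'' are preserved under passage to sub-families. The only edge case worth flagging is $m = 1$, where $X_1 = \{x_1\}$ trivially meets the definition because the betweenness clause demands three distinct indices and so is vacuous.
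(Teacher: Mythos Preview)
Your proposal is correct and follows essentially the same approach as the paper: take $X_m = \text{conv}(x_1,\dots,x_m)$ and observe that both the distinctness and the no-betweenness condition are inherited from the larger family, while $X_m \subseteq X_n$ follows from monotonicity of the convex hull. The paper's proof is just a terser version of yours (it asserts these facts without spelling them out), and your appeal to Theorem~\ref{dim-char} for convexity of $X_n$ is unnecessary---$X_n$ is convex simply because it is a convex hull---but this does no harm.
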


\begin{proof}
Let $X_n = \text{conv}(x_1, \dots, x_{n})$ such that there do not exist $i
\neq j \neq k$ where $x_i x_j x_k$. If $1 \leq m \leq n$, let $X_m =
\text{conv}(x_1, \dots, x_{m})$, so that $X_m$ is T$m$-dimensional and $X_m
\subset X_n$.
\end{proof}

\begin{lem} \label{nolarger}
If $X_m \subseteq X_n$ for some $X_m \in \mathcal{X}_m$ and $X_n \in
\mathcal{X}_n$, then $m \leq n$.
\end{lem}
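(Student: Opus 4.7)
The plan is to use Theorem \ref{dim-char} to recast both hypotheses as statements about final points: $X_m$ is a compact metric subtree with $F_{X_m}=\{y_1,\dots,y_m\}$, and $X_n$ is a compact metric subtree with $F_{X_n}=\{x_1,\dots,x_n\}$. To conclude $m\le n$ I will construct an injection $\phi\colon F_{X_m}\to F_{X_n}$ and compare cardinalities.

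The case $m=1$ is trivial, since the existence of any $X_n\in\mathcal{X}_n$ forces $n\ge 1$, so I would assume $m\ge 2$. The preparatory step is to select a base point $a\in X_m$ that is not a final point of $X_m$; the midpoint of the segment $[y_1,y_2]$ is a natural choice, because it lies in $X_m$ by convexity, and it cannot equal any $y_j$ (otherwise $y_j\in(y_1,y_2)\subseteq X_m$ would contradict $y_j\in F_{X_m}$). Since $X_m\subseteq X_n$ we have $a\in X_n$, so Theorem \ref{compact-tree} applied to $X_n$ gives $X_n=\bigcup_{i=1}^{n}[a,x_i]$. For each $j$, choose an index $i(j)$ with $y_j\in[a,x_{i(j)}]$ and set $\phi(y_j)=x_{i(j)}$.

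The heart of the argument is the injectivity of $\phi$. Suppose $\phi(y_j)=\phi(y_k)=x_i$ for some $j\ne k$. Then both $y_j$ and $y_k$ sit on the geodesic $[a,x_i]$, which is isometric to a real interval, so the three points $a,y_j,y_k$ are linearly ordered on it. After relabelling, $y_j\in[a,y_k]$; since $y_j$ equals neither endpoint, in fact $y_j\in(a,y_k)$. Convexity of $X_m$ forces $[a,y_k]\subseteq X_m$, so $y_j$ lies in the interior of a segment in $X_m$, contradicting $y_j\in F_{X_m}$. Hence $\phi$ is injective and $m\le n$.

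The main obstacle is the choice of base point: if $a$ were permitted to be one of the final points of $X_m$, the injectivity step would collapse, because the case $y_j=a$ would escape the contradiction. Once $a$ has been separated from every $y_j$, the whole argument is driven by the linear ordering inside a single geodesic segment together with the convexity of $X_m$ inherited from Definition \ref{D:mt2}.
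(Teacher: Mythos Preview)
Your argument is correct. Both proofs follow the same overall strategy---use Theorem~\ref{dim-char} to identify the final-point sets, write $X_n$ as a union of segments from a base point via Theorem~\ref{compact-tree}, and build an injection $F_{X_m}\to F_{X_n}$ by recording which segment each final point of $X_m$ lands in---but your choice of base point is cleaner than the paper's. The paper takes the base point to be a final point $y_1$ of $X_n$ that is not a final point of $X_m$; since $y_1$ need not lie in $X_m$, two distinct final points of $X_m$ can sit on the same segment $[y_1,y_k]$ without immediate contradiction, and the paper must compensate with a two-clause definition of $f$ (the ``otherwise'' case mapping to $y_1$) and a separate case analysis to rule out collisions at $y_1$. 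You instead take the base point $a$ inside $X_m$ and away from every $y_j$, so any segment $[a,y_k]$ lies entirely in $X_m$ by convexity and the injectivity of $\phi$ follows in one line from the definition of final point. Your version is shorter and avoids the auxiliary case split; the paper's version has the minor advantage that it never needs to produce a non-final point of $X_m$, so it does not have to treat $m=1$ separately.
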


\begin{proof}
Let $X_m = \text{conv}(x_1, \dots, x_{m})$ and $X_n = \text{conv}(y_1, \dots,
y_{n})$. By Theorem \ref{dim-char}, this implies that $F_{X_m} = \{ x_1,
\dots, x_{m} \}$ and $F_{X_n} = \{ y_1, \dots, y_{n} \}$. We can assume that
$X_m \neq X_n$, so there is some $j$ for which $x_i \neq y_j$ for all $i \in
\{ 1, \dots, m \}$. Without loss of generality, let this $j$ be 1. By the
compactness of $X_n$, Theorem \ref{compact-tree} tells us that for any $a \in
X_n$,
$$
X_n = \bigcup_{i=1}^{n}{[a,y_i]}.
$$
Therefore, we have
$$
X_m \subseteq \bigcup_{i=1}^{n}{[y_1,y_i]} ,
$$
so for any $x_j \in F_{X_m}$, we see that $x_j \in [y_1,y_k]$ for some $k \in
\{ 1, \dots, n \}$. Now define a function $f: F_{X_m} \rightarrow F_{X_n}$ by
the following:
$$
f(x_i) = \begin{cases}
y_k & \text{ if } x_i \in [y_1,y_k] \text{ and no other element of } F_{X_m}
\text{ is in } [x_i,y_k] \\
y_1 & \text{ otherwise}
\end{cases}
$$
and if the first condition holds for more than one $k$, choose the smallest
of such $k$.

Clearly, $f$ is well-defined for all $x_i \in F_{X_m}$. We want to show that
$f$ is an injection. Suppose for a contradiction that $f(x_i) = y_k = f(x_j)$
for some $i \neq j$ in $\{ 1, \dots, m \}$ and $k \in \{ 2, \dots, n \}$.
Then by definition of $f$, we have $x_i,x_j \in [y_1,y_k]$, which implies
that either $x_i \in [x_j,y_k]$ or $x_j \in [x_i,y_k]$. If the former, then
we contradict the fact that no element of $F_{X_m}$, other than $x_j$, is in
$[x_j,y_k]$; and if the latter, we contradict the fact that no element of
$F_{X_m}$, other than $x_i$, is in $[x_i,y_k]$.

So now suppose that $f(x_i) = y_1 = f(x_j)$ for some $i \neq j$ in $\{ 1,
\dots, m \}$. Note that $x_i \neq y_1 \neq x_j$, so $x_i$ and $x_j$ are
mapped to $y_1$ by the ``otherwise'' condition, not by the first condition.
We now claim that if some $x_k$ is mapped to $y_1$ by the ``otherwise''
condition, then the segment $(x_k,y_1]$ does not contain any elements of
$X_m$. Indeed, since $x_k \in [y_1,y_k]$ for some $k \neq 1$, and since $x_k$
is not mapped to $y_k$, there must be some $x_l \in (x_k,y_k]$. Now, if there
was some $w \in X_m$ in the segment $(x_k,y_1]$ then we would have $x_k \in
(x_l,w)$, which contradicts our assumption that $x_k$ is a final point of
$X_m$. Thus, $(x_k,y_1]$ does not contain any elements of $X_m$.

We therefore know that $(x_i,y_1]$ and $(x_j,y_1]$ contain no elements of
$X_m$. But then by the three point property, there is a $w \in M$ such that
$[x_i,y_1] \cap [x_j,y_1] = [w,y_1]$ and $[x_i,x_j] \cap [w,y_1] = \{ w \}$.
Since $w \in [x_i,x_j]$, $w \in X_m$. If $x_i \neq w$, then $w \in
(x_i,y_1]$, and if $x_j \neq w$, then $w \in (x_j,y_1]$. Both possibilities
contradict the fact that $(x_i,y_1]$ and $(x_j,y_1]$ contain no elements of
$X_m$. Hence, $x_i = w = x_j$, another contradiction. Therefore, no two
distinct elements of $F_{X_m}$ can map to $y_1$. We can then conclude that
$f$ is an injection.

Since $F_{X_m}$ and $F_{X_n}$ are finite sets, the injectivity of $f$ implies
that $| F_{X_m} | \leq | F_{X_n} |$, so $m \leq n$. Notice also that the
function $f$ is a bijection if and only if $m = n$.
\end{proof}

Now that we have established some facts about T$n$-dimensional subsets of a
metric tree, we can give the following definition for the T$n$-width.

\begin{defi}
Let $A$ be a subset of a metric tree $(M,d)$, and let $\mathcal{X}_n$ denote
the set of T$n$-dimensional subsets of $M$. We define the T\textit{$n$-width}
of $A$ to be
$$
\delta_n^T(A,M) = \inf_{X \in \mathcal{X}_n} \sup_{a \in A}
\inf_{x \in X} d(a,x).
$$
If $M$ is T*$n$-dimensional (i.e., $M$ does not
contain any T$k$-dimensional subsets for $k > n$ but does contain a
T$n$-dimensional subset), then by convention we say that $\delta_k^T(A,M) =
\delta_n^T(A,M)$.
\end{defi}

First observe that if $A$ is unbounded, then $\delta_n^T(A,M) = \infty$ for
each $n \in \mathbb{N}$. Indeed, this follows directly from the fact that
every T$n$-dimensional set is bounded. Conversely, it is easy to see that if
$A$ is bounded, then $\delta_n^T(A,M) < \infty$ for each $n$. Therefore, we
really will be interested only in the T$n$-widths of bounded sets.

\begin{exm}
Let $M = \mathbb{R}^k$ endowed with the radial metric. If $B_r$ denotes the
(open or closed) ball of radius $r$ in $\mathbb{R}^k$, then
$\delta_n^T(B_r,M) = r$ for all $n \in \mathbb{N}$.
\end{exm}

To see this, let $n \in \mathbb{N}$. Choose an $X_n \in \mathcal{X}_n$ such
that the origin is in $X_n$. Since $d(a,0) \leq r$ for any $a \in B_r$, we
have $\inf_{x \in X_n} d(a,x) \leq r$. Thus, $\sup_{a \in B_r} \inf_{x \in
X_n} d(a,x) \leq r$, so $\delta_n^T(B_r,M) \leq r$.

Now we must show that for each $X_n \in \mathcal{X}_n$, $\sup_{a \in B_r}
\inf_{x \in X_n} d(a,x) \geq r$. If $X_n \in \mathcal{X}_n$, then there exist
points $x_1, \dots, x_{n}$ such that $X_n = \text{conv}(x_1, \dots, x_{n})$.
Now, choose a ray $v$ beginning at the origin such that $v$ contains none of
the $x_i$'s. This implies that $v$ contains no points in $X_n$, with the
possible exception of the origin. Now, for each $\varepsilon > 0$, we can
find a point $p_{\varepsilon}$ in $v \cap B_r$ for which
$d(p_{\varepsilon},0) > r - \varepsilon$. Then, if $x \in X_n$, we know that
$d(p_{\varepsilon}, x) = d(p_{\varepsilon},0) + d(x,0)$ since $x$ and
$p_{\varepsilon}$ do not lie on the same ray. Thus, $d(p_{\varepsilon}, x) >
r - \varepsilon$. Hence, for each $\varepsilon > 0$, $\inf_{x \in X_n}
d(p_{\varepsilon}, x) > r - \varepsilon$, so $\sup_{a \in B_r} \inf_{x \in
X_n} d(a,x) \geq r$. Therefore, $\delta_n^T(B_r,M) \geq r$.

\def\labelenumi{{\rm\arabic{enumi}.}}

In the following we first give basic properties of $\delta_n^T(A,M) < \infty$.

\begin{pro} \label{noninc}
Let $A \subseteq B$ be subsets of $M$. Then
\begin{enumerate}
\item For any $n \in \mathbb{N}$, $\delta_n^T(A,M) \leq \delta_n^T(B,M)$.
\item The sequence $\{ \delta_n^T(A,M) \}_{n \in \mathbb{N}}$ is
non-increasing.
\end{enumerate}
\end{pro}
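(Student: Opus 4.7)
Part (1) is an immediate unraveling of the definition. Fixing $n \in \mathbb{N}$ and any $X \in \mathcal{X}_n$, the inclusion $A \subseteq B$ gives $\sup_{a \in A} \inf_{x \in X} d(a,x) \leq \sup_{a \in B} \inf_{x \in X} d(a,x)$, and taking the infimum over $X \in \mathcal{X}_n$ on both sides yields $\delta_n^T(A,M) \leq \delta_n^T(B,M)$.

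For part (2), I want $\delta_{n+1}^T(A,M) \leq \delta_n^T(A,M)$. If $\mathcal{X}_{n+1} = \emptyset$, then $M$ is T*$k$-dimensional for some $k \leq n$, and the convention in the definition of T$n$-widths forces $\delta_{n+1}^T(A,M) = \delta_n^T(A,M)$. Otherwise, my plan is to show that every $X_n \in \mathcal{X}_n$ is contained in some $X_{n+1} \in \mathcal{X}_{n+1}$; then $\inf_{x \in X_{n+1}} d(a,x) \leq \inf_{x \in X_n} d(a,x)$ for every $a \in A$, and taking suprema over $A$ and then infima over $\mathcal{X}_n$ on the left and $\mathcal{X}_{n+1}$ on the right delivers the inequality.

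To produce $X_{n+1}$ I would fix any T$(n+1)$-dimensional $Y = \text{conv}(y_1,\dots,y_{n+1})$ in $M$ and iteratively set $Z_0 = X_n$ and $Z_j = \text{conv}(Z_{j-1} \cup \{y_j\})$ for $j = 1,\dots,n+1$. The pivotal claim is that the T-dimension of $Z_j$ either equals or is exactly one larger than that of $Z_{j-1}$. Granting this, $Z_{n+1} \supseteq Y$ forces (via Lemma \ref{nolarger}) the T-dimension of $Z_{n+1}$ to be at least $n+1$, so the non-decreasing integer sequence of dimensions, with unit jumps from $n$ to at least $n+1$, must equal $n+1$ at some index $j_0$; taking $X_{n+1} := Z_{j_0}$ finishes the proof.

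The main obstacle is establishing this $0$-or-$1$ jump claim. For this I would invoke the gate property in metric trees: given the compact convex set $Z_{j-1}$ (a metric tree by Theorem \ref{dim-char}) and a point $y_j \notin Z_{j-1}$, there is a unique $q \in Z_{j-1}$ lying on every segment $[y_j, z]$ with $z \in Z_{j-1}$, which one can derive from the three point property together with compactness of $Z_{j-1}$. Then $\text{conv}(Z_{j-1} \cup \{y_j\}) = Z_{j-1} \cup [q, y_j]$, and the characterization of final points in Theorem \ref{dim-char} yields $F_{Z_j} = F_{Z_{j-1}} \cup \{y_j\}$ when $q \notin F_{Z_{j-1}}$ (jump of $1$) and $F_{Z_j} = (F_{Z_{j-1}} \setminus \{q\}) \cup \{y_j\}$ when $q \in F_{Z_{j-1}}$ and $|F_{Z_{j-1}}| \geq 2$ (jump of $0$), with the degenerate one-point case handled separately by observing that $[q, y_j]$ is T$2$-dimensional.
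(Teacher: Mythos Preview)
Your argument is correct, but Part~(2) proceeds along a genuinely different line from the paper's. The paper argues by contradiction: assuming $\delta_n^T(A,M) < \delta_{n+1}^T(A,M)$, it fixes an $X_n$ beating every $X_{n+1}$ and then shows that adjoining \emph{any} finite set of points to $X_n$ must leave it T$n$-dimensional (else one could extract a T$(n+1)$-dimensional superset of $X_n$, contradicting the assumed strict inequality). Applying this to the final points of an arbitrary $Y \in \mathcal{X}_{n+1}$ yields a T$n$-dimensional set containing $Y$, contradicting Lemma~\ref{nolarger}. Your approach is instead direct and constructive: you build an explicit $X_{n+1} \supseteq X_n$ by adjoining the final points of some $Y \in \mathcal{X}_{n+1}$ one at a time and tracking the T-dimension via the gate property. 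This gives a cleaner geometric picture and actually proves the stronger statement that $\mathcal{X}_{n+1} \neq \emptyset$ implies every element of $\mathcal{X}_n$ sits inside one of $\mathcal{X}_{n+1}$; the cost is that you need the gate map for closed convex subsets of a metric tree, which the paper does not isolate (though it is indeed derivable from the three point property and compactness, as you indicate).

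One small omission: in Part~(1) you implicitly assume $\mathcal{X}_n \neq \emptyset$. The paper handles the complementary case separately by invoking the convention $\delta_n^T = \delta_k^T$ when $M$ is T*$k$-dimensional with $k < n$, reducing to the case already treated. You should add the same one-line remark.
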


\begin{proof}
$1.$ Let $n \in \mathbb{N}$ such that $M$ has at least one T$n$-dimensional
subset. Let $X \in \mathcal{X}_n$. Since $A \subseteq B$,
$$
\sup_{a \in A}\, \inf_{x \in X} d(a,x) \leq \sup_{b \in B}\, \inf_{x \in X}
d(b,x).
$$
This holds for any $X \in \mathcal{X}_n$, so we have
$$
\inf_{X \in \mathcal{X}_n}\, \sup_{a \in A} \,\inf_{x \in X} d(a,x) \leq
\inf_{X \in \mathcal{X}_n}\, \sup_{b \in B} \,\inf_{x \in X} d(b,x).
$$
Hence, $\delta_n^T(A,M) \leq \delta_n^T(B,M)$.

If $n \in \mathbb{N}$ such that $M$ has no T$n$-dimensional subsets, then
there is a $k < n$ such that $M$ is T*$k$-dimensional. By definition, $M$ has
at least one T$k$-dimensional subset. Thus, by what we just found,
$\delta_k^T(A,M) \leq \delta_k^T(B,M)$. By convention, $\delta_k^T(A,M) =
\delta_n^T(A,M)$ and $\delta_k^T(B,M) = \delta_n^T(B,M)$, so $\delta_n^T(A,M)
\leq \delta_n^T(B,M)$. Thus, for any $n \in \mathbb{N}$, $\delta_n^T(A,M)
\leq \delta_n^T(B,M)$.

\vsks

$2.$ Suppose that the sequence is increasing somewhere. Then $A$ must be
bounded (since otherwise each T$n$-width is $\infty$) and there is an $n \in
\mathbb{N}$ such that $\delta_n^T(A,M) <  \delta_{n+1}^T(A,M)$. Thus, there
is some $X_n \in \mathcal{X}_n$ (say $X_n = \text{conv}(x_1, \dots, x_{n})$)
such that
$$
\sup_{a \in A} \,\inf_{x \in X_n} d(a,x) <
\sup_{a \in A} \,\inf_{x \in X_{n+1}} d(a,x)
$$
for all $X_{n+1} \in \mathcal{X}_{n+1}$. We claim that for any finite set $\{
y_1, \dots, y_m \}$ in $M$, the set $Y_m = \text{conv}(x_1, \dots, x_{n},
y_1, \dots, y_m)$ is T$n$-dimensional.

Suppose that $Y_m$ is not T$n$-dimensional. Since $X_n \subseteq Y_m$, we
know by Lemma \ref{nolarger} that $Y_m$ is no less than T$n$-dimensional.
Thus, $Y_m$ is T$(n+k)$-dimensional for some $k \geq 1$. By removing some of
the $y_i$'s if $k > 1$, we can produce a set $Y_p = \text{conv}(x_1, \dots,
x_{n}, y_1, \dots, y_p)$ such that $Y_p$ is T$(n+1)$-dimensional. Now, since
$X_n \subseteq Y_p$, for any $a \in A$, we have
$$
\inf_{x \in X_n} d(a,x) \geq \inf_{x \in Y_p} d(a,x).
$$
Therefore,
$$
\sup_{a \in A} \,\inf_{x \in X_n} d(a,x) \geq
\sup_{a \in A} \,\inf_{x \in Y_p} d(a,x).
$$
But since $Y_p$ is T$(n+1)$-dimensional, this contradicts the fact that
$$
\sup_{a \in A} \,\inf_{x \in X_n} d(a,x) <
\sup_{a \in A} \,\inf_{x \in X_{n+1}} d(a,x)
$$
for all $X_{n+1} \in \mathcal{X}_{n+1}$. Hence, $Y_m$ must be T$n$-dimensional.

Now, suppose that there exists some $X_{n+1} \in \mathcal{X}_{n+1}$. Then
$X_{n+1} = \text{conv}(y_1, \dots, y_{n+1})$ for some $y_1, \dots, y_{n+1}
\in M$. By what we just established, the set
$$
Y_m = \text{conv}(x_1, \dots, x_{n}, y_1, \dots, y_{n+1})
$$
is T$n$-dimensional. But then $X_{n+1} \subseteq Y_m$, so we have a
T$(n+1)$-dimensional set within a T$n$-dimensional set, contrary to Lemma
\ref{nolarger}. We can therefore conclude that $M$ does not contain any
T$(n+1)$-dimensional sets, so by Lemma \ref{lower-dim}, $M$ does not contain
any T$k$-dimensional sets for $k > n$. Hence, by definition, $M$ is
T*$n$-dimensional, so by convention, $\delta_n^T(A,M) = \delta_{n+1}^T(A,M)$,
a contradiction. Thus, the sequence is non-increasing.
\end{proof}

\subsection{Compact widths}
A concept that is related to the $n$-width is the \textit{compact width}.
Given a metric space $(M,d)$, let $\mathcal{X}$ denote the set of compact
subsets of $M$. If $A$ is a subset of $M$, we define the compact width of $A$
to be
$$
a(A,M) = \inf_{X \in \mathcal{X}} \,\sup_{a \in A} \,\inf_{x \in X} d(a,x).
$$
Observe that like T$n$-widths, $a(A,M) = \infty$ if and only if $A$ is
unbounded. Indeed, this follows easily from the fact that every compact set
in a metric space is bounded. We also have the following lemma.

\begin{lem} \label{geq}
If $(M,d)$ is a metric tree with subset $A$, then  $\delta_n^T(A,M) \geq
a(A,M)$ for all $n \in \mathbb{N}$.
\end{lem}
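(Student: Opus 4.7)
The plan is to exploit the structural theorem about T$n$-dimensional subsets that we already proved, namely Theorem \ref{dim-char}. That theorem tells us that every T$n$-dimensional subset of $M$ is, in particular, a compact metric tree, and hence a compact subset of $M$. So the family $\mathcal{X}_n$ of T$n$-dimensional subsets is contained in the family $\mathcal{X}$ of all compact subsets of $M$.

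The main step is then a one-line monotonicity of $\inf$: since we take the infimum of the same functional $X \mapsto \sup_{a \in A} \inf_{x \in X} d(a,x)$ over a smaller collection when we restrict from $\mathcal{X}$ to $\mathcal{X}_n$, the resulting value can only be larger. That is,
$$
a(A,M) \;=\; \inf_{X \in \mathcal{X}} \sup_{a \in A} \inf_{x \in X} d(a,x) \;\le\; \inf_{X \in \mathcal{X}_n} \sup_{a \in A} \inf_{x \in X} d(a,x) \;=\; \delta_n^T(A,M),
$$
which is what we want.

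The only bookkeeping issue is the degenerate case covered by the convention in the definition of $\delta_n^T$: if $M$ is T*$k$-dimensional with $k<n$, then $\mathcal{X}_n$ is empty and $\delta_n^T(A,M)$ is declared to equal $\delta_k^T(A,M)$. In that case I would simply apply the argument above to $k$ instead of $n$, getting $\delta_k^T(A,M) \ge a(A,M)$, and then invoke the convention to conclude $\delta_n^T(A,M) = \delta_k^T(A,M) \ge a(A,M)$. By Lemma \ref{lower-dim} a T$k$-dimensional subset always exists when $M$ is T*$k$-dimensional, so the argument is not vacuous. There is no real obstacle here: the content of the lemma is entirely carried by the compactness half of Theorem \ref{dim-char}, and once that is invoked the inequality is just monotonicity of the infimum over a sub-family.
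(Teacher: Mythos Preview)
Your proposal is correct and follows exactly the paper's own argument: invoke Theorem \ref{dim-char} to get $\mathcal{X}_n \subseteq \mathcal{X}$, then use monotonicity of the infimum. Your extra paragraph handling the T*$k$-dimensional convention is a nice bit of care that the paper leaves implicit.
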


\begin{proof}
This is a direct consequence of Theorem \ref{dim-char}. Since each
T$n$-dimensional subset of~$M$ is compact, $\mathcal{X}_n \subseteq
\mathcal{X}$ for all $n \in \mathbb{N}$. Hence, for each $n$,
$$
\delta_n^T(A,M) = \inf_{X \in \mathcal{X}_n} \,\sup_{a \in A} \,\inf_{x \in
X} d(a,x) \geq
\inf_{X \in \mathcal{X}} \,\sup_{a \in A} \,\inf_{x \in X} d(a,x) = a(A,M),
$$
as desired.
\end{proof}

\begin{defi}
We say that a metric space $(X,d)$ \textit{has the property} $P_1$ if for every
$\varepsilon > 0$ and $r > 0$, there is a $\delta > 0$ such that for each
$x,y \in X$, there is a $z \in B(x,\varepsilon)$ for which $B(x,r+\delta)
\cap B(y,r+\theta) \subseteq B(z,r+\theta)$ if $0 < \theta < \delta$.
\end{defi}

Property $P_1$  was studied by several authors, for example see \cite{Amir},
\cite{March}.  The following theorem establishes a relationship between the
property $P_1$ and compact widths.

\begin{thm} [\cite{kamal}] \label{P1}
Let $X$ be a Banach space. If $X$ has the property $P_1$, then for each
bounded subset $A$ of $X$, the compact width $a(A,X)$ is attained.
\end{thm}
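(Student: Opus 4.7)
The plan is to obtain the optimal compact set as a Hausdorff limit of a sequence of near-optimal compact sets, with property $P_1$ ensuring this sequence is Cauchy. Write $r = a(A,X)$. By the definition of the infimum, there exist compact sets $K_n \subseteq X$ and a sequence $\eta_n \downarrow 0$ with
$$
\sup_{a \in A}\,\inf_{x \in K_n}\,d(a,x) \le r + \eta_n.
$$
The goal is to replace the $K_n$'s by a sequence $L_n$ of compact sets that is Cauchy in the Hausdorff metric and still satisfies $\sup_{a\in A}d(a,L_n) \le r + \theta_n$ for some $\theta_n \to 0$.

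The key refinement step, where property $P_1$ enters, goes as follows. Suppose we have a current compact set $K$ with $\sup_{a\in A}d(a,K) \le r+\delta$, a tighter approximant $K'$ with $\sup_{a\in A}d(a,K') \le r+\theta$ for some $0 < \theta < \delta$, and a chosen $\varepsilon > 0$. Property $P_1$ assigns to each pair $(k,k') \in K \times K'$ a point $z(k,k') \in B(k,\varepsilon)$ with
$$
B(k, r+\delta) \cap B(k', r+\theta) \subseteq B(z(k,k'),\, r+\theta).
$$
For any $a \in A$, selecting $k \in K$ and $k' \in K'$ that (almost) realize the closest-point distances places $a$ inside the intersection on the left, hence inside $B(z(k,k'), r+\theta)$. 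A carefully chosen compact subset $\tilde K$ of the collection $\{z(k,k') : (k,k') \in K \times K'\}$, indexed for instance by a finite $\varepsilon$-net of $K \times K'$, then lies within Hausdorff distance at most $\varepsilon$ of $K$ and still satisfies $\sup_{a\in A}d(a,\tilde K) \le r+\theta$.

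Iterating the refinement with geometrically decaying $\varepsilon_n$ and $\theta_n$ produces a sequence $L_n$ of compact sets with $d_H(L_n, L_{n+1}) \le \varepsilon_n$, hence Cauchy in the Hausdorff metric. Since $X$ is complete as a Banach space, the space of nonempty compact subsets of $X$ under the Hausdorff metric is complete, so $L_n \to L^*$ for some compact $L^* \subseteq X$. Passing to the limit yields $\sup_{a\in A}d(a,L^*) \le r$, and the reverse inequality is immediate from the definition of $a(A,X)$; thus the compact width is attained at $L^*$.

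The main obstacle I anticipate is the technical care required in the refinement step. Property $P_1$ is a pointwise statement about individual balls, whereas $\tilde K$ must be compact and must work uniformly for all $a \in A$ at once. Reconciling the dependence of the realizers $k,k'$ on $a$ with the need to select only finitely many (or totally boundedly many) points $z(k,k')$, while simultaneously controlling both $d_H(K,\tilde K) \le \varepsilon$ and the improved error bound $r+\theta$, is the delicate quantitative core of the argument, and is presumably where \cite{kamal} concentrates its work.
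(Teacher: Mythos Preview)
The paper does not give its own proof of this theorem: the statement is quoted from \cite{kamal} and used as a black box. The only comment the paper makes about the proof is in the subsequent corollary, where it remarks that Kamal's argument ``uses none of the linear structure of a Banach space; it applies equally well to complete metric spaces.'' There is therefore nothing in the paper to compare your proposal against.

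As for the proposal itself, your outline---take near-optimal compact approximants, use $P_1$ to refine the current approximant toward a tighter one while moving it by at most $\varepsilon$ in Hausdorff distance, iterate with summable $\varepsilon_n$, and pass to the Hausdorff limit using completeness---is the natural scheme and is consistent with how $P_1$ is formulated (and with the paper's remark that only completeness of $X$ is used). You have correctly identified the one genuine technical point: in the refinement step, the $z(k,k')$ depend on the pair $(k,k')$, which in turn depends on $a$, so one must pass to a finite (or totally bounded) collection of such $z$'s while absorbing the resulting perturbation into the slack between $\theta$ and $\delta$. This is indeed where the quantitative bookkeeping lives, and since you explicitly defer that part to \cite{kamal}, your write-up is a plausible sketch rather than a complete proof.
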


\begin{thm} \label{tree-P1}
Every metric tree has the property $P_1$.
\end{thm}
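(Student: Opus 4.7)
My plan is, given $\varepsilon, r > 0$, to set $\delta := \varepsilon/2$ and, for a given pair $x, y \in M$, to choose $z$ as a specific point on the metric segment $[x, y]$. If $d(x, y) < \varepsilon$ I will take $z := y$, which lies in $B(x, \varepsilon)$ and makes the required containment $B(x, r+\delta) \cap B(y, r+\theta) \subseteq B(y, r+\theta) = B(z, r+\theta)$ trivial. Otherwise, writing $L := d(x, y) \geq \varepsilon$, I will take $z$ to be the unique point of $[x, y]$ with $d(x, z) = \varepsilon/2$, which again lies in $B(x, \varepsilon)$; the task is then to verify that every $p \in B(x, r+\delta) \cap B(y, r+\theta)$ satisfies $d(z, p) \leq r + \theta$.

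The tool that drives the verification is the three point property applied to the triple $p, x, y$: it produces $w \in M$ with $[x, p] \cap [y, p] = [w, p]$ and $[x, y] \cap [w, p] = \{w\}$. The second equality immediately places $w$ on $[x, y]$, and a short verification using axiom (2) of Definition \ref{D:mt2} yields the decomposition $[x, w] \cup [y, w] = [x, y]$. From this decomposition it follows that $w$ acts as a ``gate'' from $p$ to the segment, that is,
$$
d(q, p) = d(q, w) + d(w, p) \quad \text{for every } q \in [x, y].
$$
Setting $s := d(x, w)$ and applying the gate identity to $q = x$, $q = y$, and $q = z$ gives
$$
d(x, p) = s + d(w, p), \qquad d(y, p) = (L - s) + d(w, p), \qquad d(z, p) = |s - \varepsilon/2| + d(w, p).
$$

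From here the proof reduces to a one-parameter estimate in $s$. If $s \geq \varepsilon/2$, the first and third identities together with $d(x, p) \leq r + \delta$ give $d(z, p) \leq (s - \varepsilon/2) + (r + \delta - s) = r + \delta - \varepsilon/2 = r$. If $s < \varepsilon/2$, the second and third identities together with $d(y, p) \leq r + \theta$ and $L \geq \varepsilon$ give $d(z, p) \leq (\varepsilon/2 - s) + (r + \theta - L + s) = r + \theta - (L - \varepsilon/2) \leq r + \theta - \varepsilon/2$. Either way $d(z, p) \leq r + \theta$, as needed.

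The only real obstacle I anticipate is extracting the gate identity $d(q, p) = d(q, w) + d(w, p)$ cleanly from the three point property. Once that identity is in hand the inclusion of balls collapses onto a simple case split in the parameter $s$, and the choice $\delta = \varepsilon/2$ is forced by arranging that both halves of the split close with room to spare.
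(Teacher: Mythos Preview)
Your proof is correct and follows essentially the same route as the paper's: choose $z$ on the segment $[x,y]$ (or $z=y$ when $x$ and $y$ are close), apply the three point property to the triple $\{x,y,p\}$ to obtain the median point $w$, and then split into two cases according to the position of $z$ relative to $w$ on $[x,y]$. The only cosmetic differences are that the paper allows any $0<\delta<\varepsilon$ (placing $z$ at distance $\delta$ from $x$ and splitting cases at $xy\gtrless\delta$) whereas you fix $\delta=\varepsilon/2$ and split at $xy\gtrless\varepsilon$, and that you package the two sub-case computations via the single ``gate identity'' $d(q,p)=d(q,w)+d(w,p)$ rather than arguing each separately.
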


\begin{proof}
Let $(M,d)$ be a metric tree, and let $\varepsilon > 0$ and $r > 0$ be given.
Choose any $0 < \delta < \varepsilon$. We claim that such
$\delta$ works regardless of~$r$.

Let $x,y \in M$, and first suppose that $xy \geq \delta$. Choose $z \in
[x,y]$ such that $xz = \delta$, and since $\delta < \varepsilon$, we have $z
\in B(x,\varepsilon)$. We claim that with this choice of $\delta$ and $z$, we
have $B(x,r+\delta) \cap B(y,r+\theta) \subseteq B(z,r+\theta)$ for $0 <
\theta < \delta$.

Suppose $w \in B(x,r+\delta) \cap B(y,r+\theta)$. By the three point
property, there exists a $u \in M$ such that $[x,w] \cap [w,y] = [w,u]$ and
$[x,y] \cap [w,u] = \{ u \}$. Since $u \in [x,y]$ and $z \in [x,y]$, we know
that either $z \in [x,u]$ or $z \in [u,y]$. If $z \in [x,u]$, then we have
the following:
$$
\begin{aligned}
zw &= xw - xz &&\text{ since } z \in [x,u] \text{ and } u \in [x,w] \text{
implies that } z \in [x,w] \\
&< r + \delta - \delta &&\text{ since } w \in B(x,r+\delta) \text{ and }
xz = \delta \\
&\leq r + \theta &&\text{ since } \theta > 0.
\end{aligned}
$$
If $z \in [u,y]$, then we have the following:
$$
\begin{aligned}
zw &= wy - zy &&\text{ since } z \in [y,u] \text{ and } u \in [y,w] \text{
implies that } z \in [y,w] \\
&< r + \theta - zy &&\text{ since } w \in B(y,r+\theta) \\
&\leq r + \theta &&\text{ since } zy \geq 0.
\end{aligned}
$$
Therefore, in either case, $w \in B(z,r+\theta)$, so $B(x,r+\delta) \cap
B(y,r+\theta) \subseteq B(z,r+\theta)$.

Now suppose that $xy < \delta$. In this case, choose $z = y$. Since $\delta <
\varepsilon$, we have $z \in B(x,\varepsilon)$, and since $B(y,r+\theta) =
B(z,r+\theta)$, we have $B(x,r+\delta) \cap B(y,r+\theta) \subseteq
B(z,r+\theta)$ for $0 < \theta < \delta$.

Thus, by choosing $0<\delta < \varepsilon $, for any $x,y \in M$
there is a $z \in M$ for which $B(x,r+\delta) \cap B(y,r+\theta) \subseteq
B(z,r+\theta)$ for $0 < \theta < \delta$.
\end{proof}

\begin{cor} \label{tree-attained}
For any bounded subset $A$ of a complete metric tree $(M,d)$, the compact width
$a(A,M)$ is attained.
\end{cor}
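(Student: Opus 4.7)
The plan is to chain Theorem~\ref{tree-P1} with Theorem~\ref{P1}. By Theorem~\ref{tree-P1}, the complete metric tree $(M,d)$ has property~$P_1$, and Theorem~\ref{P1} asserts precisely that in any Banach space with property~$P_1$ the compact width of every bounded set is attained. So in spirit the conclusion should follow immediately by combining these two facts with the given boundedness of $A$.

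The gap to bridge is that Theorem~\ref{P1}, as quoted from \cite{kamal}, is stated only for Banach spaces, whereas $(M,d)$ carries no linear structure. I would resolve this by inspecting the proof in \cite{kamal} and observing that it uses the Banach structure only through two ingredients: metric completeness and property~$P_1$ itself. Concretely, the argument takes a minimizing sequence $K_n$ of compact subsets with
\[
r_n \;=\; \sup_{a \in A}\,\inf_{x \in K_n} d(a,x) \;\downarrow\; a(A,M),
\]
and iteratively applies property~$P_1$ to replace a pair of ball centers $x,y$ by a single center $z \in B(x,\varepsilon)$ for which $B(x,r+\delta) \cap B(y,r+\theta) \subseteq B(z,r+\theta)$. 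Collecting the $z$'s produced at each stage yields a totally bounded set whose closure, by completeness, is a compact subset of the ambient space that realizes $a(A,M)$. Since $(M,d)$ is complete by hypothesis and satisfies $P_1$ by Theorem~\ref{tree-P1}---which, crucially, supplies the merged center $z$ \emph{inside} $M$---the construction runs verbatim in the metric tree.

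The step I expect to be the main obstacle is verifying that the argument of \cite{kamal} is genuinely free of linear-space features. The natural worry is that ``merged'' centers might be produced in a Banach space via averaging, with no metric-tree analogue; but property~$P_1$ is exactly what renders the merging intrinsic to the metric, and Theorem~\ref{tree-P1} guarantees the required $z$ lies in $M$. Once this linearity-free reading of \cite{kamal} is confirmed, the totally bounded set assembled during the iteration sits entirely within $M$, its closure is compact by completeness of $M$, and by construction it achieves the infimum defining $a(A,M)$, which is the desired conclusion.
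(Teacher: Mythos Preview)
Your proposal is correct and follows exactly the paper's own approach: combine Theorem~\ref{tree-P1} with Theorem~\ref{P1}, and justify the apparent mismatch by noting that the proof of Theorem~\ref{P1} in \cite{kamal} uses only completeness and property~$P_1$, not the linear structure of a Banach space. In fact you supply more detail than the paper, which dispatches the corollary in a single sentence making precisely this observation.
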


\begin{proof}
In \cite{kamal}, Theorem \ref{P1} above is proved for Banach spaces. However,
the proof uses none of the linear structure of a Banach space; it applies
equally well to complete metric spaces.
\end{proof}

\begin{thm} \label{lim-delta}
For any subset $A$ of a metric tree $(M,d)$,
$$
\lim_{n \rightarrow \infty}{\delta_n^T(A,M)} = a(A,M).
$$
Here we take the convention
that if $M$ is {\rm T*}$k$-dimensional then $\delta_n^T(A,M) =
\delta_k^T(A,M)$ for $n > k$.
\end{thm}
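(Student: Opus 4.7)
By Lemma~\ref{geq} we have $\delta_n^T(A,M) \geq a(A,M)$ for every $n$, and by Proposition~\ref{noninc}(2) the sequence $\{\delta_n^T(A,M)\}$ is non-increasing; hence its limit $L$ exists in $[0,\infty]$ and satisfies $L \geq a(A,M)$. The plan is to show that for every $\varepsilon>0$ some $\delta_k^T(A,M)$ lies within $\varepsilon$ of $a(A,M)$, as monotonicity will then force $L \leq a(A,M)+\varepsilon$. The case where $A$ is unbounded is immediate (both sides equal $\infty$), so I assume $A$ is bounded.

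Fix $\varepsilon>0$. From the definition of $a(A,M)$ as an infimum, pick a (non-empty) compact $K \subseteq M$ with
$$
\sup_{a \in A} \inf_{x \in K} d(a,x) \leq a(A,M) + \frac{\varepsilon}{2}.
$$
Total boundedness of $K$ yields a finite $(\varepsilon/2)$-net $\{y_1,\dots,y_m\} \subseteq K$. Set $Y = \text{conv}(y_1,\dots,y_m)$. By Lemma~\ref{convex-hull}, $Y = \bigcup_{i,j}[y_i,y_j]$ is a finite union of compact metric segments, so $Y$ is compact; being convex, it is itself a metric tree. Moreover, any point lying in the relative interior of some segment $[y_i,y_j] \subseteq Y$ fails to be final in $Y$, so $F_Y \subseteq \{y_1,\dots,y_m\}$. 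Theorem~\ref{dim-char} therefore certifies that $Y$ is T$k$-dimensional for $k = |F_Y| \leq m$.

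For each $a \in A$, compactness of $K$ furnishes $x \in K$ with $d(a,x) \leq a(A,M)+\varepsilon/2$, and the net property furnishes some $y_i$ with $d(x,y_i) \leq \varepsilon/2$; the triangle inequality then gives $\inf_{y \in Y} d(a,y) \leq d(a,y_i) \leq a(A,M)+\varepsilon$. Taking the supremum over $a \in A$ with $Y \in \mathcal{X}_k$ as witness,
$$
\delta_k^T(A,M) \leq \sup_{a \in A}\, \inf_{y \in Y} d(a,y) \leq a(A,M)+\varepsilon.
$$
Proposition~\ref{noninc}(2) then propagates this bound to every $n \geq k$ (the T*$p$-dimensional convention is already absorbed into the monotonicity statement, and in that case $k \leq p$ since $Y \subseteq M$). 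Letting $\varepsilon \to 0$ completes the proof.

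The step I expect to require the most care is the claim that the convex hull of a finite set of points in a metric tree is T$k$-dimensional for some $k \leq m$; this is precisely where Theorem~\ref{dim-char} does the structural work, by characterizing T$k$-dimensional sets as compact metric trees with exactly $k$ final points. Everything else fits the standard pattern of approximating a compact set by a finite $\eta$-net and absorbing errors via the triangle inequality, coupled with the monotonicity of $\delta_n^T$.
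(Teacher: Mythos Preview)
Your proof is correct and follows essentially the same route as the paper: approximate $a(A,M)$ by a compact $K$, extract a finite net, take its convex hull to obtain a T$k$-dimensional set, and then combine the triangle-inequality estimate with monotonicity and Lemma~\ref{geq}. The only cosmetic differences are your $\varepsilon/2$ splitting (the paper uses thirds, not invoking attainment of the infimum on $K$) and your more explicit justification---via Lemma~\ref{convex-hull} and Theorem~\ref{dim-char}---that $\text{conv}(y_1,\dots,y_m)$ is T$k$-dimensional, a step the paper asserts without elaboration.
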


\proof
First, observe that if $A$ is unbounded, then $\delta_n^T(A,M) = \infty =
a(A,M)$ for all $n$, so the result holds trivially. Therefore, suppose that
$A$ is bounded. Let $\varepsilon > 0$ be given, and let $X \in \mathcal{X}$
such that
$$
\sup_{a \in A} \,\inf_{x \in X} d(a,x) \leq a(A,M) + \frac{\varepsilon}{3}\,.
$$
We now wish to approximate $X$ by a T$n$-dimensional set. By the compactness
of $X$, there exists a finite set of points $x_1, \dots, x_{m}$ in $X$ such
that
$$
X \subseteq \bigcup_{i=1}^{m}B\Bigl(x_i,\frac{\varepsilon}{3}\Bigr).
$$
Let $X_n = \text{conv}(x_1, \dots, x_{m})$, so $X_n$ is a T$n$-dimensional
set for some $n \leq m$.

Let $a \in A$. Then there exists a $y \in X$ such that $d(a,y) \leq \inf_{x
\in X} d(a,x) + \frac{\varepsilon}{3}$. Also, there exists an $x_i$ such that
$d(x_i,y) \leq \frac{\varepsilon}{3}$. We therefore have
$$
d(a,x_i) \leq d(a,y) + d(x_i,y) \leq \inf_{x \in X} d(a,x) +
\frac{\varepsilon}{3} + \frac{\varepsilon}{3} = \inf_{x \in X} d(a,x) +
\frac{2 \varepsilon}{3}\,.
$$
Hence, we know that
$$
\inf_{x \in X_n} d(a,x) \leq \inf_{x \in X} d(a,x) + \frac{2 \varepsilon}{3}
$$
for any $a \in A$. This implies that
$$
\sup_{a \in A} \,\inf_{x \in X_n} d(a,x) \leq
\sup_{a \in A} \,\inf_{x \in X} d(a,x) + \frac{2 \varepsilon}{3} \leq a(A,M)
+ \frac{2 \varepsilon}{3} + \frac{\varepsilon}{3}\, ,
$$
and as a result,
$$
\delta_n^T(A,M) = \inf_{X_n \in \mathcal{X}_n} \,\sup_{a \in A} \,\inf_{x \in
X_n} d(a,x) \leq
\sup_{a \in A} \,\inf_{x \in X_n} d(a,x) \leq a(A,M) + \varepsilon.
$$
Therefore, for every $\varepsilon > 0$, there exists an $n \in \mathbb{N}$
such that $\delta_n^T(A,M) \leq a(A,M) + \varepsilon$.

By Proposition \ref{noninc}, the sequence $\{ \delta_n^T(A,M) \}_{n \in
\mathbb{N}}$ is non-increasing. If $\varepsilon > 0$ is given, choose $N \in
\mathbb{N}$ such that $\delta_N^T(A,M) \leq a(A,M) + \varepsilon$. We then
know that $\delta_n^T(A,M) \leq a(A,M) + \varepsilon$ for any $n \geq N$, so
$$
\delta_n^T(A,M) - a(A,M) \leq \varepsilon.
$$
Also by Lemma \ref{geq}, we know that for all $n \in \mathbb{N}$,
$\delta_n^T(A,M) \geq a(A,M)$. Hence,
$$
| \delta_n^T(A,M) - a(A,M) | = \delta_n^T(A,M) - a(A,M) \leq \varepsilon.
$$
Therefore,
$$
\lim_{n \rightarrow \infty}{\delta_n^T(A,M)} = a(A,M).\qquad\sq
$$

\makeatletter
\immediate\write\@mainaux{\string\@input{bcp\volno.aux}}%
\makeatother


\begin{thebibliography}{11}

\bibitem{AkBo}
A. G. Aksoy, M. S. Borman, A. L. Westfahl,
\emph{Compactness and measures of noncompactness in metric trees},
in: Banach and Function Spaces~II,
Yokohama Publ., Yokohama, 2008, 277--292.

\bibitem{AkKh}
A. G. Aksoy, M. A. Khamsi,
\emph{A selection theorem in metric trees},
Proc. Amer. Math. Soc. {134} (2006), 2957--2966.

\bibitem{AkMa}
A. G. Aksoy, B. Maurizi,
\emph{Metric trees, hyperconvex hulls, and extensions},
Turkish J.~Math. {32} (2008), 219--234.

\bibitem {AkTi}
A. G. Aksoy, T. Oikhberg,
\emph{Some results on metric trees},
in: Banach Algebras 2009,
Banach Center Publ. {91}, Polish Acad. Sci. Inst. Math., Warsaw, 2010, 9--34.

\bibitem {Amir}
D. Amir, J. Mach, K. Saatkamp,
\emph{Existence of Chebyshev centers, best $n$-nets and best compact approximants},
Trans. Amer. Math. Soc. {271} (1982), 513--524.

\bibitem{ap}
N. Aronszajn, P. Panitchpakdi,
\emph{Extension of uniformly continuous transformations and hyperconvex metric spaces},
Pacific J.~Math. {6} (1956), 405--439; correction: ibid. {7}~(1957), 1729.

\bibitem{bcp}
I. Bartolini, P. Ciaccia, M. Patella,
\emph{String matching with metric trees using approximate distance},
in: String Processing and Information Retrieval,
Lecture Notes in Comp. Sci. {2476},
Springer, Berlin, 2002, 271--283.

\bibitem{Best}
M. Bestvina,
\emph{$\mathbb{R}$-trees in topology, geometry, and group theory},
in: Handbook of Geometric Topology,
North-Holland, Amsterdam, 2002, 55--91.

\bibitem{Blum}
L. M. Blumenthal,
\emph{Theory and Applications of Distance Geometry},
Clarendon Press, Oxford, 1953.

\bibitem{Brid}
M. Bridson, A. Haefliger,
\emph{Metric Spaces of Non-Positive Curvature},
Springer, Berlin, 1999.

\bibitem {Brown}
K. S. Brown,
\emph{Buildings},
Springer, New York, 1989.

\bibitem{buneman}
P. Buneman,
\emph{A note on the metric properties of trees},
J.~Combinatorial Theory Ser.~B {17} (1974), 48--50.

\bibitem{Dono}
D. L. Donoho,
\emph{Compressed sensing},
IEEE Trans. Inform. Theory {52} (2006), 1289--1306.

\bibitem{Dovd}
D. L. Donoho, M. Vetterli, R. A. DeVore, I. Daubechies,
\emph{Data compression and harmonic analysis},
IEEE Trans. Inform. Theory {44} (1998), 2435--2476.

\bibitem{dmt}
A. Dress, V. Moulton, W. Terhalle,
\emph{T-theory, an overview},
European J. Combin. {17} (1996), 161--175.

\bibitem{Goebel}
K. Goebel, S. Reich,
\emph{Uniform Convexity, Hyperbolic Geometry, and Nonexpansive Mappings},
Monogr. Textbooks Pure Appl. Math. {83},
Marcel Dekker, New York, 1984.

\bibitem{isbell}
J. R. Isbell,
\emph{Six theorems about injective metric spaces},
Comment. Math. Helv. {39} (1964), 65--76.

\bibitem{kamal}
A. Kamal,
\emph{On certain diameters of bounded sets},
Portugal Math. {51} (1994), 321--333.

\bibitem{kirk}
W. A. Kirk,
\emph{Hyperconvexity of $\mathbb{R}$-trees},
Fund. Math. {156} (1998), 67--72.

\bibitem{Kolm}
A. N. Kolmogorov, V. M. Tihomirov,
\emph{$\varepsilon$-entropy and $\varepsilon$-capacity of sets in function spaces},
Uspehi Mat. Nauk {14} (1959), no.~2, 3--86; English transl.
Amer. Math. Soc. Transl.~(2) {17} (1961), 277--364.

\bibitem {March}
J. Mach,
\emph{On existence of the best simultaneous approximation},
J.~Approx. Theory {25} (1979), 258--265.

\bibitem{mmot}
J. C. Mayer, L. K. Mohler, L. G. Oversteegen, E. D. Tymchatyn,
\emph{Characterization of separable metric $\mathbb{R}$-trees},
Proc. Amer. Math. Soc. {115} (1992), 257--264.

\bibitem{mo}
J. C. Mayer, L. G. Oversteegen,
\emph{A topological characterization of $\mathbb{R}$-trees},
Trans. Amer. Math. Soc. {320} (1990), 395--415.

\bibitem{Pink}
A. Pinkus,
\emph{$n$-Widths in Approximation Theory},
Ergeb. Math. Grenzgeb.~(3) {7},
Springer, Berlin, 1985.

\bibitem{Posn}
E. C. Posner, E. R. Rodemich,
\emph{$\epsilon$-entropy and data compression},
Ann. Math. Stat. {42} (1971), 2079--2125.

\bibitem{ss}
C. Semple, M. Steel,
\emph{Phylogenetics},
Oxford Lecture Ser. Math. Appl. {24},
Oxford Univ. Press, Oxford, 2003.

\end{thebibliography}
\end{document}